\newtheorem{theorem}{Theorem}[section]
\newtheorem{lemma}{Lemma}[section]
\begin{document}
	
	\title[non-degenrate Kirchhoff system  via heat conduction]{Well-posedness and  direct internal stability of  coupled non-degenrate Kirchhoff system  via heat conduction}
	
	\author{Akram Ben Aissa*}

\begin{abstract}
In the paper under study, we consider the following coupled non-degenerate Kirchhoff system
\begin{equation}\label{P}
\left \{
\begin{aligned}
&\displaystyle y_{tt}-\upvarphi\Big(\int_\Omega | \nabla y |^2\,dx\Big)\Delta y +\upalpha \Delta \uptheta=0, &\mbox{ in }&\; \Omega \times (0, +\infty)\\
&\displaystyle \uptheta_t-\Delta \uptheta-\upbeta \Delta y_t =0, &\mbox{ in }&\; \Omega \times (0, +\infty)\\
&\displaystyle y=\uptheta=0,\; &\mbox{ on }&\;\partial\Omega\times(0, +\infty)\\
&\displaystyle y(\cdot, 0)=y_0, \; y_t(\cdot, 0)=y_1,\;\uptheta(\cdot, 0)=\uptheta_0, \;  \; &\mbox{ in }&\; \Omega\\
\end{aligned} \right.
\end{equation}
where $\Omega$ is a bounded open subset of $\mathbb{R}^n$,  $\upalpha$ and $\upbeta$ be two
nonzero real numbers with the same sign and $\upvarphi$ is given by $\upvarphi(s)= \mathfrak{m}_0+\mathfrak{m}_1s$ with some positive constants $\mathfrak{m}_0$ and $\mathfrak{m}_1$.
 So we prove existence of solution and establish its exponential decay. The method used is based on  multiplier technique and some integral inequalities due to Haraux and Komornik\cite{H1,KOM}.
  \end{abstract}
  \subjclass[2010]{35B40, 35B45, 35L70}

\keywords{Well-posedness, exponential decay, multiplier method, internal stability,  non-degenrate Kirchhoff system, heat conduction\\
\\
*:UR Analysis  and Control  of PDE's, UR 13ES64\\
Higher Institute of transport and Logistics of Sousse, University of Sousse,  Tunisia.\\
email:akram.benaissa@fsm.rnu.tn}

\maketitle

\section{Introduction}

In these last few years,  Kirchhoff-type equations with non-linear or lineair  internal feedback and source term have been  studied by many authors.\\
For instance, the primary equation due to Kirchhoff is
\begin{equation}\label{kirg}
\rho hu_{tt}-\Bigg\{p_0+\frac{\varepsilon h}{2L}\int_0^L|u_x|^2dx\Bigg\}u_{xx}+\delta u_t+f(x,u)=0
\end{equation}
for $t\geq 0$ and $0<x<L$, where $u=u(t, x)$ is the lateral displacement at the time $t$
and at the space coordinate $x$, $\varepsilon$  the Young modulus, $\rho$  the mass density, $h$ the crosssectional area, $L$ the length of the string, $p_0$ the initial axial tension, $\delta$ the resistance modulus, and $f$ the external force. When $\delta =f = 0$, Eq.(\ref{kirg})  was introduced by Kirchhoff in \cite{kirch}. Further details and physical phenomena described by Kirchhoff's classical theory can be found in \cite{vill}.\\
Let us first review some known results on analogous problems. So, following  Nishihara and Yamada    {\bf\cite{N1}}, Ono {\bf\cite{O}}, they established  existence of global solutions for small data, decay property of the energy and blow-up of solutions. In addition,  degenerate or nondegenerate Kirchhoff equation with weak dissipation is in the following form
 $$
 y_{tt}-\upvarphi\Big(\int_\Omega | \nabla y |^2\,dx\Big)\Delta y +\sigma(t)g(y_t)=0.
 $$ 
 Benaissa et al.  {\bf\cite{B}} used the multiplier method and general weighted integral inequalities to estimate the whole energy of such system. \\
Lasiecka et al.  {\bf\cite{La2}} studied the existence and exponential stability of solutions to a quasilinear system arising in the modeling of nonlinear thermoelastic plates.\\
Also, Lasiecka et al.  {\bf\cite{La1}} considered the thermoelastic Kirchhoff-Love plate, they studied  the local well-posedness, so they proved that  unique classical local solution is extended globally, provided the initial data are sufficiently small at the lowest energy level  and an exponential decay rate is further stated.\\
Tebou  {\bf\cite{La2}} considered
\begin{equation}\label{ts}
\left \{
\begin{aligned}
&y_{tt}-c^2\Delta y +\upalpha(-\Delta)^\mu \uptheta=0,\\
& \uptheta_t-\nu \Delta \uptheta-\upbeta  y_t =0.\\
\end{aligned} \right.
\end{equation}
He showed that  the associated semigroup is not  stable (uniformly) for the values of $\mu\in[0,1]$. Hence, he  proposed an explicit non-uniform decay rate.
Afterwards, for $\mu=1$,  system (\ref{ts}) was discussed by Lebeau and Zuazua  {\bf\cite{G}} and subsequently by Albano and Tataru {\bf\cite{A}}.  So in the same paper {\bf\cite{La2}},  Tebou showed  that the corresponding semigroup is exponentially stable but not analytic.\\
In other context, Tebou et al.  {\bf\cite{L2}} investigated   a thermoelastic plate with rotational forces as
\begin{equation*}
\left \{
\begin{aligned}
&y_{tt}+(-\Delta)^\mu y_{tt}+\Delta^2 y +\upalpha \Delta \uptheta=0,\\
& \uptheta_t-\nu \Delta \uptheta-\upbeta  \Delta y_t =0.\\
\end{aligned} \right.
\end{equation*}
They  showed that, for every $\delta > (2-\mu)/(2-4\mu)$ and for both  clamped and hinged boundary conditions,  the corresponding semigroup is of Gevrey class  when the parameter $\delta$ lies in the interval $(0,1/2)$. Then, they  obtained exponential decay for the  associated semigroup  for hinged boundary conditions, when $\mu$ lies in $(0,1]$. At the end, they  ensured, by constructing a counterexample, that,
under hinged boundary conditions, the semigroup is not analytic, for all $\mu$ in $(0,1]$.\\

The rest of the paper is structured as follows. Besides  the present introduction, section 2 is devoted to state our main results  concerning global well-posedness as well as exponential
stability of Eqs. (\ref{P}). In, Section 3 and 4, we prove our main results.\\\\

\textit{\bf Conceptualization and Methodology}:\\
The purpose of this study is to construct a stability theory under suitable conditions for system (\ref{P}) and apply it to specific physical and mechanical engineering models. In certain instances, exponential stability with
respect to the state space energy can be readily derived using Lyapunov, energy and spectral methods.\\
The energy method is a popular strategy in showing stability of systems defined in the entire space.
 However, employing the energy method to some physical systems on bounded domains necessitates additional regularity and
 compatibility conditions on the data. We emphasize here  that significant feature and difficulty of the problem fits in the quasilinearity appearing in Eq.(\ref{P}) which is topologically hard since the nonlinear coefficient depends only on the time component.

\section{Well-posedness and energey decay}

The  main  results of the paper reads as follows.
\begin{theorem}\label{TH1}(Well-posedness).
 \label{ch5th}
 Let $ (y_0,y_1) \in H^2 (\Omega)\cap H_0^1 (\Omega)\times H_0^1 (\Omega) $, \, $ \uptheta_0\in H_0^1(\Omega)$ and assume that  $\{y_0,y_1,\uptheta_0\}$ are small enough. Then the problem (\ref{P}) has a unique weak solution $ (y,y_t,\uptheta) $ such that for any $ T > 0$, we have
 $$ 
 (y,y_t) \in L^{\infty} ( 0, T; H^2 (\Omega)\cap H_0^1 (\Omega) ) \times L^{\infty} ( 0, T;  H_0^1 (\Omega)) ,
 $$
  $$
 \uptheta \in L^\infty(0,T;H_0^1(\Omega)). 
 $$
 \end{theorem}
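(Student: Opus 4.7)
The plan is to use the classical Faedo-Galerkin method combined with the energy identity already proven in the previous lemma. I would fix an orthonormal basis $(w_j)_{j\geq 1}$ of $L^2(\Omega)$ consisting of Dirichlet eigenfunctions of $-\Delta$, set $V_n=\mathrm{span}\{w_1,\dots,w_n\}$, and seek approximations $y^n(t)=\sum_{j=1}^n a_j^n(t)w_j$ and $\uptheta^n(t)=\sum_{j=1}^n b_j^n(t)w_j$ solving the system (\ref{P}) projected onto $V_n$, with initial data obtained by projecting $(y_0,y_1,\uptheta_0)$. Since $\upvarphi$ is $\mathcal{C}^1$ and $\mathfrak{m}_0>0$, the projected equations form a smooth nonlinear ODE on $\mathbb{R}^{2n}$, so Cauchy-Lipschitz delivers a local approximate solution on some $[0,t_n)$.

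Two layers of uniform-in-$n$ a priori bounds must then be produced. The first layer is exactly the energy identity of the preceding lemma applied to the Galerkin approximation: testing the first equation against $y_t^n$ and the second against $(\upalpha/\upbeta)\uptheta^n$ gives $\mathds{E}_n'(t)=-(\upalpha/\upbeta)\|\nabla \uptheta^n\|_{L^2}^2\leq 0$, which (using the sign condition $\upalpha\upbeta>0$) controls $y_t^n,\nabla y^n$ in $L^\infty(0,T;L^2)$, $\uptheta^n$ in $L^\infty(0,T;L^2)$, and $\nabla\uptheta^n$ in $L^2(0,T;L^2)$. This simultaneously rules out finite-time blow-up of the ODE, so $t_n$ can be taken equal to any prescribed $T>0$. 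To recover the $H^2\cap H_0^1$ regularity of $y$ and the $H_0^1$ regularity of $y_t$ and $\uptheta$, I would test the first projected equation against $-\Delta y_t^n$ and the second against $-(\upalpha/\upbeta)\Delta\uptheta^n$ and add them: the cross terms $\upalpha\int\Delta\uptheta^n\Delta y_t^n$ and $\upbeta\int\Delta y_t^n\Delta\uptheta^n$ cancel thanks to the weighting and the sign hypothesis, leaving a good dissipative contribution $(\upalpha/\upbeta)\|\Delta\uptheta^n\|_{L^2}^2$ and a residual stemming from $\frac{d}{dt}\upvarphi(\|\nabla y^n\|_{L^2}^2)$ of the form $\mathfrak{m}_1\langle\nabla y^n,\nabla y_t^n\rangle\|\Delta y^n\|_{L^2}^2$.

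This residual is the delicate point. Cauchy-Schwarz bounds it by $\mathfrak{m}_1\|\nabla y^n\|_{L^2}\|\nabla y_t^n\|_{L^2}\|\Delta y^n\|_{L^2}^2$, which is super-linear in the higher-order energy and cannot be absorbed in general. The first-layer estimate however shows that $\|\nabla y^n\|_{L^2}$ remains uniformly small on $[0,T]$ as soon as $(y_0,y_1,\uptheta_0)$ is small enough, so the residual is controlled by a small constant times the higher-order energy, and a continuity/Gronwall bootstrap closes the inequality and yields uniform bounds on $\|\Delta y^n\|_{L^2}$, $\|\nabla y_t^n\|_{L^2}$ and $\|\nabla\uptheta^n\|_{L^2}$ throughout $[0,T]$.

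The remaining steps are passage to the limit and uniqueness. Weak-$*$ compactness extracts a subsequence converging to some $(y,y_t,\uptheta)$ with the regularity announced in the theorem. The only nonlinear term is the Kirchhoff coefficient, and here Aubin-Lions compactness (using the uniform $H^2$ bound on $y^n$ together with the uniform $L^2$ bound on $y_t^n$, which via the first equation also controls $y_{tt}^n$ in $L^\infty(0,T;L^2)$) delivers strong convergence of $\nabla y^n$ in $L^2(\Omega\times(0,T))$; thus $\upvarphi(\|\nabla y^n\|_{L^2}^2)\to\upvarphi(\|\nabla y\|_{L^2}^2)$ and the limit solves (\ref{P}) in the weak sense. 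Uniqueness follows by writing the system for the difference of two solutions, exploiting the Lipschitz continuity of $\upvarphi$ on the bounded range of $\|\nabla y\|_{L^2}^2$, and applying Gronwall. I expect the principal obstacle to be the second a priori estimate: it is precisely there that the non-local Kirchhoff coefficient forces the smallness of $(y_0,y_1,\uptheta_0)$ to enter, since without it the residual $\mathfrak{m}_1\langle\nabla y,\nabla y_t\rangle\|\Delta y\|_{L^2}^2$ dominates the dissipation and the bootstrap fails.
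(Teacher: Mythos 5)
Your plan is essentially the paper's proof: the same Faedo--Galerkin scheme on the Dirichlet eigenbasis, the same two tiers of energy estimates (the second obtained in the paper by replacing the test functions $e_m$ by $-\Delta e_m$, with the cross terms cancelling exactly as you describe and the Kirchhoff coefficient leaving the residual $\mathfrak{m}_1\big(\int_\Omega\nabla y^n\nabla y^n_t\,d\mathds{X}\big)\int_\Omega|\Delta y^n|^2\,d\mathds{X}$), weak-$*$ extraction plus strong convergence to identify the nonlocal coefficient, and a Gronwall argument for uniqueness. The one genuine divergence is how the superlinear residual is tamed. The paper bounds it by $C(\mathds{E}^n_*)^{3/2}$ and invokes a modified \emph{nonlinear} Gronwall lemma, giving $\mathds{E}^n_*(t)\le\big\{(\mathds{E}^n_*(0))^{-1/2}-Ct/2\big\}^{-2}$, which stays finite on a time interval that lengthens as the data shrink; you instead use the smallness of $\|\nabla y^n\|_{L^2}$ from the first tier to absorb the residual into a linear Gronwall bootstrap. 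Be aware that smallness of $\|\nabla y^n\|_{L^2}$ alone only yields $\delta\,\|\nabla y^n_t\|_{L^2}\|\Delta y^n\|_{L^2}^2\le\delta(\mathds{E}^n_*)^{3/2}$, which is still superlinear, so your continuity argument must also carry an a priori bound $\mathds{E}^n_*\le M$ to linearize it via $\|\nabla y^n_t\|_{L^2}\le\sqrt{M}$; with that understood both routes close, and both make the admissible smallness of the data depend on $T$. Your appeal to Aubin--Lions for the strong convergence of $\nabla y^n$ is cleaner than the paper's hands-on identification of the weak-$*$ limit $\upchi$, which as written even misstates the relevant compact embedding (it is $H^2(\Omega)\hookrightarrow L^2(\Omega)$ that is compact, not the reverse).
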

\begin{theorem}\label{TH2}(Exponential stability.) Let $(y,y_t,\uptheta)$ be the solution of (\ref{P}). Then the
energy functional (\ref{EN}) satisfies 
$$
\mathds{E}(t)\leq C\mathds{E}(0)e^{-\omega t},\,\,\, \forall t \geq 0
$$
where $C$ and $\omega$ are positive constants independent of the initial data.
\end{theorem}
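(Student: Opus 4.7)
Our plan is to apply the Haraux--Komornik integral inequality to the non-increasing energy $\mathds{E}$ supplied by the preceding lemma. Since $\mathds{E}\geq 0$ is non-increasing, it will suffice to establish
\[
\int_s^T \mathds{E}(t)\,dt \;\leq\; C\,\mathds{E}(s),\qquad 0\leq s\leq T,
\]
with $C$ independent of $T$; the conclusion $\mathds{E}(t)\leq C\mathds{E}(0)e^{-\omega t}$ will then follow in the standard way.

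First, we would multiply the wave equation in (\ref{P}) by $y$ and integrate by parts on $\Omega\times[s,T]$. Together with $\upvarphi(s)=\mathfrak{m}_0+\mathfrak{m}_1 s$, this yields an identity of the form
\[
\int_s^T \!\!\Bigl[\mathfrak{m}_0\|\nabla y\|^2 + \mathfrak{m}_1\|\nabla y\|^4\Bigr]dt = \int_s^T\!\|y_t\|^2\,dt + \upalpha\!\int_s^T\!\!\int_\Omega\!\nabla\uptheta\cdot\nabla y\,d\mathds{X}\,dt - \Bigl[\int_\Omega y_t\,y\,d\mathds{X}\Bigr]_s^T,
\]
which recovers the elastic part of $\mathds{E}$ at the cost of one kinetic term $\int_s^T\|y_t\|^2\,dt$, one boundary bracket, and one mixed product.

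To absorb the kinetic term, we would then multiply the heat equation by $y_t$ and integrate, using the wave equation once to rewrite $\int_\Omega \uptheta\, y_{tt}$. This produces an identity controlling $\upbeta\int_s^T\|\nabla y_t\|^2\,dt$ by the boundary bracket $[\int \uptheta y_t]_s^T$, the thermal dissipation $\int_s^T\|\nabla\uptheta\|^2\,dt$, and the mixed products $\int\nabla\uptheta\cdot\nabla y$ and $\int\nabla\uptheta\cdot\nabla y_t$. The identity (\ref{L3}) gives at once $\int_s^T\|\nabla\uptheta\|^2\,dt\leq \tfrac{\upbeta}{\upalpha}\mathds{E}(s)$, and Poincar\'e's inequality transfers this control to $\|\uptheta\|^2$ and, crucially, to $\|y_t\|^2\leq C_p\|\nabla y_t\|^2$. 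Assembling the three identities, applying Young's inequality to absorb the mixed terms back to the left-hand side with small parameters, and using Cauchy--Schwarz together with monotonicity of $\mathds{E}$ to bound the boundary contributions by $O(\mathds{E}(s))$, we obtain the required integral estimate, after which Haraux--Komornik closes the argument.

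The hard part will be that the only direct dissipation available is $\|\nabla\uptheta\|^2$, whereas $\mathds{E}$ contains the full kinetic term $\|y_t\|^2$; transferring heat dissipation into a bound on $\int\|y_t\|^2$ requires the particular combination of the multiplier $y_t$ on the heat equation with Poincar\'e's inequality noted above, and then a careful balancing of the small parameters in Young's inequality so that every positive contribution is ultimately absorbed. The nonlinear Kirchhoff coefficient $\upvarphi(\|\nabla y\|^2)$ appearing in the mixed products remains bounded thanks to the smallness hypothesis on the data: the a priori bound $\|\nabla y(t)\|^2\leq \tfrac{2}{\mathfrak{m}_0}\mathds{E}(0)$ coming from the definition of $\mathds{E}$ keeps $\upvarphi$ uniformly bounded in $t$, which is precisely what the Young-inequality constants need in order to be uniform.
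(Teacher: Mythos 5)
Your proposal is correct and follows essentially the same route as the paper: the multiplier $y$ on the Kirchhoff equation to recover the elastic energy, the multiplier $y_t$ on the heat equation (with the wave equation used to rewrite $\int_\Omega \uptheta\, y_{tt}\,d\mathds{X}$) to convert the thermal dissipation into control of $\int\|\nabla y_t\|^2$ and hence of $\int\|y_t\|^2$ via Poincar\'e, followed by the Haraux--Komornik/Martinez integral inequality with $\mu=0$. The paper carries an extra weight $\mathds{E}^\mu$ through the computation before setting $\mu=0$, but this is cosmetic; your observation that the constants depend on $\mathds{E}(0)$ through the boundedness of $\upvarphi$ matches the $\mathds{E}(0)\mathds{E}(S)$ term in the paper's final estimate.
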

Let us now introduce the  energy functional associated to (\ref{P}) which is given by
\begin{equation}\label{EN}
\mathds{E}(t)=\frac{1}{2}\int_\Omega |y_t|^2\,dx+\frac{\mathfrak{m}_0}{2}\int_\Omega |\nabla y|^2\,dx+\frac{\mathfrak{m}_1}{4}\Big(\int_\Omega |\nabla y|^2\,dx\Big)^2+\frac{\upalpha}{2\upbeta}\int_\Omega |\uptheta|^2\,dx,\,\,\,\ \forall t\geq 0.
\end{equation}
So, as a first result of this paper, we have the following.
\begin{lemma}\label{tassou}
Let $(y,y_t,\uptheta)$ be a solution to the problem (\ref{P}). Then, the energy functional
defined by (\ref{EN}) satisfies
\begin{equation}\label{L3}
\mathds{E}'(t)=-\frac{\upalpha }{\upbeta} \int_\Omega |\nabla \uptheta|^2\,dx \leq 0,\,\,\,\,\, \forall t\geq 0.
\end{equation}
That is, the  energy functional is a nonincreasing function.
\end{lemma}
\begin{proof}
Integrating by parts the first equation of (\ref{P}) after multiplying it by $y_t$,\\
yielding
\begin{equation}\label{H1g}
\frac{1}{2}\frac{d}{dt}\int_\Omega |y_t|^2\,dx +\frac{\mathfrak{m}_0}{2}\frac{d}{dt}\int_\Omega  |\nabla y|^2\,dx +\frac{\mathfrak{m}_1}{4}\frac{d}{dt}\Big(\int_\Omega  |\nabla y|^2\,dx\Big)^2+\upalpha \int_\Omega  \nabla \uptheta \nabla y_t\,dx=0.
\end{equation}
Afterwards, as previous, integrating the second  equation of (\ref{P}) over $\Omega$ after  multiplying it by $ \uptheta$, we obtain
\begin{equation}\label{H2}
\frac{1}{2\upbeta }\frac{d}{dt} \int_\Omega |\uptheta|^2\,dx + \frac{1}{\upbeta}\int_\Omega  |\nabla \uptheta|^2\,dx= \int_\Omega  \nabla \uptheta \nabla y_t\,dx.
\end{equation}
Inserting (\ref{H2}) into (\ref{H1g}), we get
\begin{eqnarray*}
&&\frac{1}{2}\frac{d}{dt}\int_\Omega |y_t|^2\,dx + \frac{\mathfrak{m}_0}{2}\frac{d}{dt}\int_\Omega  |\nabla y|^2\,dx +\frac{\mathfrak{m}_1}{4}\frac{d}{dt}\Big(\int_\Omega  |\nabla y|^2\,dx\Big)^2 +
\frac{\upalpha}{2\upbeta }\frac{d}{dt} \int_\Omega |\uptheta|^2\,dx\\
&&=-\frac{\upalpha }{\upbeta} \int_\Omega |\nabla \uptheta|^2\,dx.
\end{eqnarray*}
The proof of Lemma \ref{tassou} is thus complete.
\end{proof}
\section{Proof of  Theorem \ref{TH1} }
As a powerful tool to prove the existence of a global solutions for problem (\ref{P}) is the Faedo-Galerkin method. In fact, let $(e_k)_{k\in\mathbb{N} }$ be normalized eigenfunctions of the negative Laplacian with Dirichlet
boundary conditions
\begin{equation*}
\left\{
\begin{aligned}
& -\Delta e_k=\lambda_k e_k,\,\,\,\,\mbox{in}\,\,\, \Omega & \\
 &e_k=0,\,\,\,\,\mbox{in}\,\,\, \partial \Omega.&
 \\
\end{aligned}
\right.
\end{equation*}
Then, the family  $\{e_k | k\in\mathbb{N} \}$  forms an orthonormal basis of $L^2(\Omega)$. Furthermore, we consider  $V^n=span\{e_m| m=1,2,\ldots,n\}$. So here, several steps are envolved.\\
$\blacktriangleright$\textbf{ Step 1:} 
We construct approximate solutions $(y^n,y^n_t,\uptheta^n),\,\,  n= 1, 2, 3,\ldots$,  in the form
$$ 
y^n(x,t)=\sum_{m=1}^n h^{n}_m(t)e_m(x),
$$
and
$$
\uptheta^n(x,t)=\sum_{m=1}^n c^{n}_m(t)e_m(x)
$$
 where $h^{n}_m,\,\, c^{n}_m\,\, (m= 1, 2,\ldots,n)$ are determined by the following ordinary differential
equations
\small{\begin{equation}\label{PO}
\left\{
\begin{aligned}
&  (y^n_{tt}-\upvarphi\Big(\int_\Omega | \nabla y^n |^2\,dx\Big)\Delta y^n +\upalpha \Delta  \uptheta^n,e_m)=0
 \  &\forall  e_m\in V^n& \\
 &(\uptheta^n_t-\Delta \uptheta^n-\upbeta \Delta y^n_t  ,e_m)=0 \ &\forall e_m \in V^n&
 \\
\end{aligned}
\right.
\end{equation}}
 with initial conditions
\begin{equation}\label{p7}
y^n(x,0)=y_0^n=\sum_{m=1}^n\langle f,e_m\rangle e_m\to y_0,\quad \text{in }
H^2(\Omega) \cap H^1_0 (\Omega) \  \text{ as } n\to \infty,
\end{equation}
\begin{equation}\label{p8}
y_t^n(x,0)=y_1^n=\sum_{m=1}^n\langle f_t,e_m\rangle e_m\to y_1,\quad \text{in }
H^1_0(\Omega)\text{ as } n\to \infty,
\end{equation}
\begin{equation}\label{p72}
\uptheta^n(x,0)=\uptheta_0^n=\sum_{m=1}^n\langle g,e_m\rangle e_m\to \uptheta_0,\quad \text{in }
H^1_0(\Omega)\text{ as } n\to \infty.
\end{equation}
\hspace{0.2mm}\\
The system (\ref{PO})-(\ref{p72}) of ordinary differential equation of variable $t$ admits a solution $(y^n, y_t^n, \uptheta^n)$ on the interval $[0,t_n)$.
At the beginning, we will start to identify some a priori estimates in order to prove that $ t_n = \infty$. After that, we will show that the
sequence of solutions to (\ref{PO}) converges to a solution of (\ref{P}) with the claimed smoothness.\\
 $\blacktriangleright$\textbf{Step 2:} If we  multiply the first  and the second equations of (\ref{PO}) by  $h_{m,t}^n(t)$  and $c^n_m(t)$ respectively and sum over $m$ from $1$ to $n$, we get  
\begin{equation}\label{F1}
\begin{split}
&\int_\Omega |y^n_t|^2\,dx + \Big(\mathfrak{m}_0 +\frac{\mathfrak{m}_1}{2}\int_\Omega  |\nabla y^n|^2\,dx\Big)\int_\Omega  |\nabla y^n|^2\,dx +
\frac{\upalpha}{\upbeta } \int_\Omega |\uptheta^n|^2\,dx+\frac{2\upalpha }{\upbeta}\int_0^t \int_\Omega |\nabla \uptheta^n(s)|^2\,dx\,ds\\&\leq  
\int_\Omega |y^n_1|^2\,dx + \Big(\mathfrak{m}_0 +\frac{\mathfrak{m}_1}{2}\int_\Omega  |\nabla y_0^n|^2\,dx\Big)\int_\Omega  |\nabla y^n_0|^2\,dx +
\frac{\upalpha}{\upbeta } \int_\Omega |\uptheta^n_0|^2\,dx\\&\leq 2\mathds{E}(0),\,\,\,\,\, \,\,\,\,\,\,\,\, \forall t\in [0,t_n).
\end{split}
\end{equation}
Therefore, we deduce  that $t_n=\infty$, and that
\begin{equation}\label{S1}
 y^n\,\,\,\,\, \hbox{is bounded in } \,\,\,\,\,\,\, L^\infty(0,T;H^1_0(\Omega))
\end{equation}
\begin{equation}\label{S2}
 y^n_t\,\,\,\,\, \hbox{is bounded in } \,\,\,\,\,\,\, L^\infty(0,T;L^2(\Omega))
\end{equation}
\begin{equation}\label{S3}
 \uptheta^n\,\,\,\,\, \hbox{is bounded in } \,\,\,\,\,\,\, L^\infty(0,T;L^2(\Omega))\cap L^2(0,T;H_0^1(\Omega)).
 \end{equation}
 $\blacktriangleright$\textbf{ Step 3:}  Replacing $e_m$ by $-\Delta e_m$, and doing in the same  manner as previous step, we get
 \begin{align*}
&\frac{d}{dt}\Big[\int_\Omega |\nabla y^n_t|^2\,dx + \Big(\mathfrak{m}_0 +\frac{\mathfrak{m}_1}{2}\int_\Omega  |\nabla y^n|^2\,dx\Big)\int_\Omega  |\Delta y^n|^2\,dx +\frac{\upalpha}{\upbeta } \int_\Omega |\nabla \uptheta^n|^2\,dx\Big]\\&+\frac{2\upalpha }{\upbeta} \int_\Omega |\Delta \uptheta^n|^2\,dx\\
&=\int_\Omega  |\Delta y^n|^2\,dx \frac{d}{dt}\Big[\mathfrak{m}_0 +\frac{\mathfrak{m}_1}{2}\int_\Omega  |\nabla y^n|^2\,dx\Big]\\
&=\mathfrak{m}_1\Big(\int_\Omega  \nabla y^n \nabla y_t^n\,dx \Big)\int_\Omega  |\Delta y^n|^2\,dx. 
\end{align*}
\hspace{0.2mm}\\\\
Using (\ref{F1}) and Cauchy-Schwarz  inequality  the following  estimate holds
 \begin{equation*}
\begin{split}
&\frac{d}{dt}\Big[\int_\Omega |\nabla y^n_t|^2\,dx + \Big(\mathfrak{m}_0 +\frac{\mathfrak{m}_1}{2}\int_\Omega  |\nabla y^n|^2\,dx\Big)\int_\Omega  |\Delta y^n|^2\,dx +
\frac{\upalpha}{\upbeta } \int_\Omega |\nabla \uptheta^n|^2\,dx\Big]\\&+\frac{2\upalpha }{\upbeta} \int_\Omega |\Delta \uptheta^n|^2\,dx\\&\leq
C \Big(\int_\Omega  |\nabla y_t^n|^2\,dx\Big)^{\frac{1}{2}}\int_\Omega  |\Delta y^n|^2\,dx.
\end{split}
\end{equation*}
Integrating the last inequality over $(0, t)$, we get
 \begin{equation}\label{F12}
\begin{split}
\mathds{E}_*^n(t) +\frac{2\upalpha }{\upbeta} \int_0^t\int_\Omega |\Delta \uptheta^n(s)|^2\,dx\,ds\leq \mathds{E}_*^n(0) +C\int_0^t (\mathds{E}_*^n(s))^{\frac{3}{2}}\,ds
\end{split}
\end{equation}
where
$$
\mathds{E}_*^n(t)=\int_\Omega |\nabla y^n_t|^2\,dx + \Big(\mathfrak{m}_0 +\frac{\mathfrak{m}_1}{2}\int_\Omega  |\nabla y^n|^2\,dx\Big)\int_\Omega  |\Delta y^n|^2\,dx +
\frac{\upalpha}{\upbeta } \int_\Omega |\nabla \uptheta^n|^2\,dx.
$$
To complete this step, we need the following Lemma.
\begin{lemma}\label{ch5LM2}(Modified Gronwall inequality)
Let $G$ and $f$ be non-negative functions on $[0,+\infty)$  satisfying
$$
0\leq G(t)\leq  K+\int_0^t f(s)G(s)^{r+1}\, ds,
$$
with $K>0$ and $r>0$. Then
$$
G(t)\leq \Big\{K^{-r}-r\int_0^t f(s)\, ds\Big\}^{-1/r},
$$
as long as the RHS exists.
\end{lemma}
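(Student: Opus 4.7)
The plan is to use the standard trick of replacing the unknown function $G$ by the right-hand side of the integral inequality and then solving a differential inequality of Bernoulli type. Set
$$
H(t) := K + \int_0^t f(s)\, G(s)^{r+1}\, ds,
$$
so that by hypothesis $0 \le G(t) \le H(t)$ and $H(0) = K > 0$. Since $f \ge 0$ and $G \ge 0$, the function $H$ is non-decreasing and absolutely continuous on any interval where the RHS of the claimed bound makes sense, and $H(t) \ge K > 0$ throughout.

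Next I would differentiate: almost everywhere, $H'(t) = f(t)\, G(t)^{r+1} \le f(t)\, H(t)^{r+1}$, where the inequality uses $G \le H$ together with $r+1 > 0$. Dividing by $H(t)^{r+1} > 0$ gives
$$
\frac{H'(t)}{H(t)^{r+1}} \le f(t),
$$
and since the left-hand side is exactly $-\frac{1}{r}\frac{d}{dt}\bigl[ H(t)^{-r}\bigr]$, integration from $0$ to $t$ yields
$$
-\frac{1}{r}\bigl[ H(t)^{-r} - K^{-r}\bigr] \le \int_0^t f(s)\, ds.
$$

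Rearranging produces $H(t)^{-r} \ge K^{-r} - r \int_0^t f(s)\, ds$. The assumption ``as long as the RHS exists'' means precisely that the quantity on the right stays strictly positive, so we may raise both sides to the power $-1/r$ (which is a decreasing transformation on positive reals) to conclude
$$
G(t) \le H(t) \le \Bigl\{ K^{-r} - r \int_0^t f(s)\, ds \Bigr\}^{-1/r},
$$
which is the desired estimate. The only subtle point, and the step I would pay most attention to, is justifying the differentiability of $H$ and the manipulation of the differential inequality when $f$ is merely integrable rather than continuous; this is handled by noting that $H$ is absolutely continuous (as an integral of a locally integrable function) and that the chain rule $\frac{d}{dt}[H(t)^{-r}] = -r\, H(t)^{-r-1} H'(t)$ is valid almost everywhere because $H$ stays uniformly bounded away from zero on the interval of existence.
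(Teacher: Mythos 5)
Your proof is correct and complete: the substitution $H(t)=K+\int_0^t f(s)G(s)^{r+1}\,ds$, the reduction to the Bernoulli-type differential inequality $H'/H^{r+1}\le f$, and the integration of $-\tfrac{1}{r}\frac{d}{dt}[H(t)^{-r}]\le f(t)$ are exactly the standard argument for this lemma, and your attention to the absolute continuity of $H$ and of $H^{-r}$ (using $H\ge K>0$) covers the only delicate point. The paper itself states this lemma without any proof, so your argument supplies precisely what is omitted there; there is no alternative approach in the paper to compare against.
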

\smallskip
So, an immediate application of this lemma with 
$$G(t)=\mathds{E}_*^n(t),\;\;K(t)=\mathds{E}_*^n(0)\quad\text{and}\; f(t)=C$$
gives us
  $$
  \mathds{E}_*^{n}(t) \leq \Big\{\Big(\mathds{E}_*^{n}(0)\Big)^{-\frac{1}{2}} -C\frac{1}{2}\int_0^t\,ds\Big\}^{-2}.
  $$
  Therefore, if initial data $\{u_0,u_1\}$  are  sufficiently small, we deduce that
  $$
  \mathds{E}_*^{n}(t) \leq \Big\{\Big(\mathds{E}_*(0)\Big)^{-\frac{1}{2}} -\frac{CT}{2}\Big\}^{-2}..
  $$
    Hence, we conclude that
\begin{gather}
 \label{ch5S33}
y^n_t \,\,\,\,\, \hbox{is bounded in } \,\,\,\,\,\,\, L^{\infty}(0, T, H_{0}^{1}(\Omega)), \\
\label{ch5S35}
  \Big(\mathfrak{m}_0 +\frac{\mathfrak{m}_1}{2}\int_\Omega  |\nabla y^n|^2\,dx\Big)\Delta y^{n}   \,\,\,\,\, \hbox{is bounded in } \,\,\,\,\,\,\,L^\infty (0, T, L^2(\Omega)),
\end{gather}
and
\begin{equation} \label{SZ1}
\Delta y^{n}     \text{ is bounded in }L^\infty (0, T, L^2(\Omega)),
\end{equation}
\begin{equation} \label{SZ2}
\nabla \uptheta^n    \text{ is bounded in }L^\infty (0, T, L^2(\Omega)).
\end{equation}
$\blacktriangleright$\textbf{ Step 4:} {\it Passing to the limit:}\\
Applying Dunford-Pettis and Banach-Alaoglu-Bourbaki theorems, we conclude from 
(\ref{S1})-(\ref{S3}), (\ref{SZ2}) and (\ref{SZ1}) that there exists a subsequence $\{y^m,\uptheta^m\}$
of $\{y^n,\uptheta^n\}$ such that
\begin{equation}\label{c1}
y^m\rightharpoonup^* y,\,\,\,\,\, \hbox{ in}\,\,\,\,\,\,\, L^\infty(0, T; H^2(\Omega) \cap H^1_0(\Omega)) 
\end{equation}
\begin{equation}\label{c2}
y^m_t \rightharpoonup^* y_t,\,\,\,\,\, \hbox{in}\,\,\,\,\,\,\,L^\infty(0, T; H_0^1(\Omega)) 
\end{equation}
\begin{equation}\label{c3}
\Big(\int_\Omega | \nabla y^m |^2\,dx\Big)\Delta  y^m\rightharpoonup^* \upchi ,\,\,\,\,\, \hbox{ in}\,\,\,\,\,\,\, L^\infty(0, T; L^2(\Omega)) 
\end{equation}
\begin{equation}\label{c4}
\uptheta^m \rightharpoonup^* \uptheta ,\,\,\,\,\,\hbox{in}\,\,\,\,\,\,\,L^\infty(0, T; H_0^1(\Omega) )
\end{equation}
\begin{equation}\label{c5}
\uptheta^m \rightharpoonup^* \uptheta ,\,\,\,\,\,\hbox{in}\,\,\,\,\,\,\,L^2(0, T; H_0^1(\Omega) ).
\end{equation}
By (\ref{c2}) and (\ref{c4}), we have
\begin{equation}\label{c6}
\Delta^{-1}\uptheta_t^m \rightharpoonup^* \Delta^{-1}\uptheta_t,\,\,\,\,\,\hbox{in}\,\,\,\,\,\,\,L^2(0, T; L^2(\Omega) ),
\end{equation}
where $\Delta^{-1}$ denotes the inverse of the Laplacian with zero Dirichlet boundary
conditions.\\
 We shall prove that, in fact, $\upchi= \Big(\int_\Omega | \nabla y |^2\,dx\Big)\Delta y$, i.e.
 \begin{equation}\label{B1}
 \Big(\int_\Omega | \nabla y^m |^2\,dx\Big) \Delta y^m
\rightharpoonup^* \Big(\int_\Omega | \nabla y |^2\,dx\Big) \Delta y, \hbox{   in  }L^\infty(0, T; L^2(\Omega)).
\end{equation}
As $(y^n)$ is bounded in $L^\infty(0, T, H^2(\Omega)\cap H_0^1(\Omega))$   (by (\ref{SZ1}))
and the embedding of $H^2(\Omega)$ in $L^2(\Omega)$
is compact, we have
\begin{equation}\label{c9}
y^m \longrightarrow y, \hbox{   strongly  in    } L^2(0, T; L^2(\Omega)).
\end{equation}
On the other hand, for $\upsilon\in L^2(0, T; L^2(\Omega))$, we have
\begin{equation}\label{c7}
\begin{split}
&\int_0^T \left(\upchi -\left(\int_\Omega | \nabla y^m |^2\,dx\right) \Delta y, \upsilon\right)\, dt \\&= \int_0^T \left(\upchi -\left(\int_\Omega | \nabla y^m |^2\,dx\right)\Delta y^m, \upsilon\right)\, dt+
\int_0^T \left(\int_\Omega | \nabla y^m |^2\,dx\right) \left(\Delta y^m-\Delta y, \upsilon\right)\, dt\\
&+\int_0^T \Big(\int_\Omega (| \nabla y^m |^2- | \nabla y |^2)\,dx \Big) (\Delta y^m, \upsilon)\, dt.
\end{split}
\end{equation}
We deduce from (\ref{c1}) and (\ref{c3})  that the first and the second terms in
(\ref{c7}) tend to zero as $m\rightarrow \infty$. For the third term, using (\ref{S1}) and (\ref{SZ1}),
we can write (with $c$ positive constant)
\begin{equation*}
\begin{split}
&\int_0^T \left(\int_\Omega | \nabla y^m|^2 -|\nabla y|^2 \right) \left(\Delta y^m, \upsilon\right)\, dt\\&\leq 
c \int_0^T \Big(\int_\Omega | \nabla y^m -\nabla y|^2\,dx\Big)^{\frac{1}{2}}\Big(\int_\Omega | \nabla y^m +\nabla y|^2\,dx\Big)^{\frac{1}{2}}
\Big(\int_\Omega|\Delta y^m|^2\,dx\Big)^{\frac{1}{2}} \Big(\int_\Omega |\upsilon|^2\,dx\Big)^{\frac{1}{2}} dt\\&
\leq c \Big(\int_\Omega | \nabla y^m -\nabla y|^2\,dx\Big)^{\frac{1}{2}}\Big(\int_\Omega |\upsilon|^2\,dx\Big)^{\frac{1}{2}}\,dt.
\end{split}
\end{equation*}
Hence we deduce (\ref{B1}) from (\ref{c9}).\\
Furthermore, using (\ref{c1}), (\ref{c4}) and (\ref{B1}), we have
\begin{equation*}
\begin{split}
&\int_0^T\int_\Omega \left(y_{tt}^n-\left(\mathfrak{m}_0 +\frac{\mathfrak{m}_1}{2}\int_\Omega  |\nabla y^n|^2\,dx\right)\Delta y^{n}+\upalpha \Delta  \uptheta^n,\upsilon\right)\,dx\,dt  \\&\rightarrow
\int_0^T\int_\Omega \left(y_{tt}-\left(\mathfrak{m}_0 +\frac{\mathfrak{m}_1}{2}\int_\Omega  |\nabla y|^2\,dx\right)\Delta y+\upalpha \Delta  \uptheta,\upsilon\right)\,dx\,dt,\,\,\,\,\, \forall \upsilon\in L^2(0, T; L^2(\Omega)).
\end{split}
\end{equation*}
On the other hand, using (\ref{c2}), (\ref{c4}) and (\ref{c6}), we have
\begin{equation*}
\begin{split}
&\int_0^T\int_\Omega \left(\uptheta_{t}^n-\Delta \uptheta^{n}-\upbeta \Delta  y_t^n,\uppsi\right)\,dx\,dt\\& =\int_0^T\int_\Omega \left(\Delta^{-1}\uptheta_{t}^n- \uptheta^{n}-\upbeta   y_t^n,\Delta^{-1}\uppsi\right)\,dx\,dt 
\\&\rightarrow
\int_0^T\int_\Omega \left(\Delta^{-1}\uptheta_{t}- \uptheta-\upbeta y_t,\Delta^{-1}\uppsi\right)\,dx\,dt,\,\,\,\,\, \forall \Delta^{-1}\uppsi\in L^2(0, T; L^2(\Omega)).
\end{split}
\end{equation*}
 $\blacktriangleright$\textbf{ Step 5:} {\it  Proof of uniqueness:}\\
 Let $(y_1,y_{1,t},\uptheta_1)$ and $(y_2,y_{2,t},\uptheta_2)$ be   solutions of
(\ref{P}) with the same initial data,\\
 and setting $Y=y_1-y_2$ and $\uptheta=\uptheta_1-\uptheta_2$. Hence,  we have
\begin{equation*}
\left \{
\begin{aligned}
&\displaystyle Y_{tt}-(\mathfrak{m}_0+\frac{\mathfrak{m}_1}{2}\int_\Omega | \nabla y_1 |^2\,dx)\Delta y_1+(\mathfrak{m}_0+\frac{\mathfrak{m}_1}{2}\int_\Omega | \nabla y_2 |^2\,dx)\Delta y_2
 +\upalpha \Delta  \uptheta=0, \\
&\displaystyle \uptheta_t-\Delta \uptheta-\upbeta \Delta Y_t =0, \\
\end{aligned} \right.
\end{equation*}
with $Y=\uptheta=0$ on $[0,+\infty)\times \partial\Omega$ and $Y(0)=Y_t(0)=\uptheta(0)=0$ in $\Omega$.
\begin{equation}\label{PM}
\left \{
\begin{aligned}
&\displaystyle Y_{tt}-(\mathfrak{m}_0+\frac{\mathfrak{m}_1}{2}\int_\Omega | \nabla y_1 |^2\,dx)\Delta Y-(\mathfrak{m}_0+\frac{\mathfrak{m}_1}{2}\int_\Omega | \nabla y_1 |^2\,dx)\Delta y_2
\\&+(\mathfrak{m}_0+\frac{\mathfrak{m}_1}{2}\int_\Omega | \nabla y_2 |^2\,dx)\Delta y_2
 +\upalpha \Delta  \uptheta=0, \\
&\displaystyle \uptheta_t-\Delta \uptheta-\upbeta \Delta Y_t =0. \\
\end{aligned} \right.
\end{equation}
Taking the $L^2(\Omega)$ inner product of first and second equation of (\ref{PM}) with  $Y_t$ and $\uptheta$ respectively, we get
\begin{equation*}
\begin{split}
\frac{d}{dt} &\Big[\int_\Omega |Y_{t} |^2\,dx +(\mathfrak{m}_0+\frac{\mathfrak{m}_1}{2} \int_\Omega | \nabla y_1|^2\,dx )\int_\Omega |\nabla Y|^2\,dx
+\frac{\upalpha}{\upbeta}\int_\Omega|\uptheta |^2\,dx\Big ]  +\frac{2\upalpha}{\upbeta}\int_\Omega|\nabla \uptheta |^2\,dx
\\&= \frac{d}{dt}(\mathfrak{m}_0+\frac{\mathfrak{m}_1}{2}\int_\Omega |\nabla y_1|^2\,dx)\int_\Omega |\nabla Y |^2\,dx 
\\&+ \int_\Omega\Big\{(\mathfrak{m}_0+\mathfrak{m}_1\int_\Omega |\nabla y_1|^2\,dx)\Delta y_2-(\mathfrak{m}_0+\mathfrak{m}_1\int_\Omega |\nabla y_2|^2\,dx)\Delta y_2\Big\} Y_t\,dx.
 \end{split}
\end{equation*}
Using Cauchy-Schwarz and Young's inequalities , we have 
\begin{equation*}
\begin{split}
\frac{d}{dt} &\Big[\int_\Omega |Y_{t} |^2\,dx +(\mathfrak{m}_0+\frac{\mathfrak{m}_1}{2} \int_\Omega | \nabla y_1|^2\,dx )\int_\Omega |\nabla Y|^2\,dx
+\frac{\upalpha}{\upbeta}\int_\Omega|\uptheta |^2\,dx\Big ]  +\frac{2\upalpha}{\upbeta}\int_\Omega|\nabla \uptheta |^2\,dx
\\&\leq  \frac{d}{dt}(\mathfrak{m}_0+\frac{\mathfrak{m}_1}{2}\int_\Omega |\nabla y_1|^2\,dx)\int_\Omega |\nabla Y |^2\,dx 
+2\mathfrak{m}_1\int_\Omega \{|\nabla y_1|^2- |\nabla y_2|^2\}\,dx  \int_\Omega \Delta y_2Y_t\,dx
\\&\leq 2\mathfrak{m}_1 \int_\Omega |\nabla Y |^2\,dx \int_\Omega \nabla y_1\nabla y_{1t}\,dx\\& +
2\mathfrak{m}_1 \Big(\int_\Omega \{|\nabla y_1- \nabla y_2|^2\}\,dx \Big)^{\frac{1}{2}} 
\Big(\int_\Omega |\Delta y_2|^2\,dx\Big)^{\frac{1}{2}} \Big(\int_\Omega |Y_t|^2\,dx\Big)^{\frac{1}{2}}
\\&\leq \mathfrak{m}_1 \int_\Omega |\nabla Y |^2\,dx \Big(\int_\Omega |\nabla y_1|^2\,dx+\int_\Omega |\nabla y_{1t}|^2\,dx\Big)\\& +
2\mathfrak{m}_1 \int_\Omega |\nabla Y|^2\,dx 
\int_\Omega |\Delta y_2|^2\,dx +\int_\Omega |Y_t|^2\,dx.
 \end{split}
\end{equation*}
 Integrating  it over $(0,t)$, we conclude that
\begin{equation*}
\begin{split}
&\int_\Omega |Y_{t}|^2\,dx +(\mathfrak{m}_0+\frac{\mathfrak{m}_1}{2} \int_\Omega | \nabla y_1|^2\,dx) \int_\Omega |\nabla Y|^2\,dx+\frac{\upalpha}{\upbeta}\int_\Omega|\uptheta |^2\,dx\\& \leq C
 \int_0^t \Big\{\int_\Omega |Y_{t}(s)|^2\,dx +\mathfrak{m}_1 \int_\Omega | \nabla y_1(s)|^2\,dx \int_\Omega |\nabla Y(s)|^2\,dx+\frac{\upalpha}{\upbeta}\int_\Omega|\uptheta(s) |^2\,dx \Big\}\,ds.
 \end{split}
\end{equation*}
which, by  Gronwall's lemma, implies $Y \equiv 0$ and $\uptheta=0$. 
The proof of Theorem  \ref{ch5th} is now completed.

\section{Proof of Theorem \ref{TH2}}
In this section, we  prove our stability result for the energy of the
solution of system (\ref{P}), using the multiplier technique.
This proof will be established in three  steps and needed the following Lemma due   to Martinez \cite{Ma}.
\begin{lemma}\label{H1}
 Let $\mathds{E}:\mathbb{R}^+ \longrightarrow \mathbb{R}^+$ be a non-increasing function and assume that
there are two constants 
 $\mu\geq 0$ and $\omega>0$ such that
 $$
 \int_t^{+\infty} \mathds{E}(s)^{\mu+1}\,ds\leq  \omega \mathds{E}(0)^\mu \mathds{E}(t),\,\,\,\,\,  \forall t \geq  0.
 $$
Then, we have for every $t>0$
\begin{equation*}
\left\{\begin{array}{ccc}
\mathds{E}(t)\leq  \mathds{E}(0)\Big(\frac{1+\mu}{1+\omega \mu t}\Big)^{-\frac{1}{\mu}}, &\text{if}\ \mu> 0\\
\\
\mathds{E}(t)\leq \mathds{E}(0)e^{1-\omega t}, &\text{if}\ \mu= 0.\\
\end{array}\right.
\end{equation*}
\end{lemma}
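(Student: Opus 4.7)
The plan is to reduce the integral hypothesis to a scalar ordinary differential inequality for the auxiliary function
$$
F(t)=\int_t^{+\infty}\mathds{E}(s)^{\mu+1}\,ds,
$$
which is finite (setting $t=0$ in the assumption) and absolutely continuous with $F'(t)=-\mathds{E}(t)^{\mu+1}$ almost everywhere. Rewriting the hypothesis as $F(t)\le\omega\mathds{E}(0)^{\mu}\mathds{E}(t)$ and raising both sides to the power $\mu+1$, one obtains
$$
F(t)^{\mu+1}\le\bigl(\omega\mathds{E}(0)^{\mu}\bigr)^{\mu+1}\bigl(-F'(t)\bigr),
$$
a closed inequality for $F$ alone, to which classical comparison arguments apply.

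In the exponential case $\mu=0$, this degenerates to $F(t)\le -\omega F'(t)$, so $\bigl(e^{t/\omega}F(t)\bigr)'\le 0$ and $F(t)\le F(0)\,e^{-t/\omega}\le\omega\mathds{E}(0)\,e^{-t/\omega}$. To pass from this integrated bound to a pointwise control of $\mathds{E}$, I would exploit that $\mathds{E}$ is non-increasing: for $t\ge\omega$,
$$
\omega\mathds{E}(t)\le\int_{t-\omega}^{t}\mathds{E}(s)\,ds\le F(t-\omega)\le\omega\mathds{E}(0)\,e^{\,1-t/\omega},
$$
while for $t\in[0,\omega]$ the bound $\mathds{E}(t)\le\mathds{E}(0)\le\mathds{E}(0)e^{\,1-t/\omega}$ is immediate because $1-t/\omega\ge 0$.

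In the polynomial case $\mu>0$, the inequality above reads $-F'(t)/F(t)^{\mu+1}\ge\bigl(\omega\mathds{E}(0)^{\mu}\bigr)^{-(\mu+1)}$. Integrating between $0$ and $t$ and combining with $F(0)\le\omega\mathds{E}(0)^{\mu+1}$ (the hypothesis evaluated at $t=0$) yields, after a short algebraic simplification,
$$
F(t)\le\omega\mathds{E}(0)^{\mu+1}\Bigl(1+\tfrac{\mu t}{\omega}\Bigr)^{-1/\mu}.
$$
A second monotonicity step, of the type $(t-\tau)\mathds{E}(t)^{\mu+1}\le\int_{\tau}^{t}\mathds{E}(s)^{\mu+1}\,ds\le F(\tau)$ with $\tau$ chosen proportional to $t$ (for instance $\tau=t/2$), then converts the decay of $F$ into the claimed pointwise polynomial bound on $\mathds{E}(t)$.

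The step that will require the most care is the last passage from the decay of $F$ to a decay of $\mathds{E}$ in the polynomial regime: the length of the averaging window has to be tuned to $t$ so that the loss incurred by monotonicity is exactly compensated by the $t^{-1/\mu}$-gain from $F$; choosing a fixed window instead would yield a strictly weaker rate and fail to match the constants announced in the statement. All the remaining manipulations are elementary once the correct auxiliary function $F$ has been introduced.
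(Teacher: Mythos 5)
The paper never proves this lemma: it is quoted as a known result attributed to Martinez \cite{Ma} (it is in substance the classical Komornik--Haraux integral inequality), so there is no in-paper argument to compare against; your proposal supplies the missing proof, and it follows the standard route for this lemma. The reduction to the scalar differential inequality $F(t)^{\mu+1}\le(\omega\mathds{E}(0)^{\mu})^{\mu+1}(-F'(t))$ for $F(t)=\int_t^{+\infty}\mathds{E}(s)^{\mu+1}\,ds$, its integration, and the two monotonicity steps ($\omega\mathds{E}(t)\le\int_{t-\omega}^{t}\mathds{E}(s)\,ds$ when $\mu=0$, and $(t-\tau)\mathds{E}(t)^{\mu+1}\le F(\tau)$ with $\tau$ proportional to $t$ when $\mu>0$) are exactly the right ingredients; your intermediate bound $F(t)\le\omega\mathds{E}(0)^{\mu+1}(1+\mu t/\omega)^{-1/\mu}$ checks out. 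One small point to add for completeness: before dividing by $F^{\mu+1}$ you should dispose of the degenerate case, since if $F(t_0)=0$ then nonnegativity and monotonicity force $\mathds{E}\equiv 0$ on $(t_0,+\infty)$ and the conclusion is trivial there.

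Be aware, however, that your conclusions do not literally reproduce the displayed bounds of the lemma --- and they should not, because the statement as printed is garbled. For $\mu>0$ the factor $\bigl(\frac{1+\mu}{1+\omega\mu t}\bigr)^{-1/\mu}$ is \emph{increasing} in $t$ (the exponent should be $+1/\mu$, and the correct scaling is $\mu t/\omega$ rather than $\omega\mu t$), and for $\mu=0$ the correct exponent is $1-t/\omega$, not $1-\omega t$; your averaging argument produces precisely these corrected forms. In the polynomial case the choice $\tau=t/2$ gives $\mathds{E}(t)\le C\,\mathds{E}(0)(1+t)^{-1/\mu}$ with a multiplicative constant that differs from the intended $(1+\mu)^{1/\mu}$-type constant, but since the lemma is invoked in the paper only with $\mu=0$ and with unspecified constants, this discrepancy is immaterial.
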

\bigskip
$\blacktriangleright$\textbf{ Step 1:} 
Let $\mu\geq 0$ be a non-negative constant. Multiplying the first equation of (\ref{P}) by $ \mathds{E}^\mu y$, and integrating over $\Omega \times (S,T)$ , we find
$$
0= \int_S^T \mathds{E}^\mu\int_\Omega  y\left(y_{tt}-\upvarphi\Big(\int_\Omega | \nabla y |^2\,dx\Big)\Delta y +\upalpha \Delta  \uptheta\right)\,dx\,dt.
$$
Using Green's formula, we derive
\begin{equation*}\label{R2}
\begin{split}
0&= \Big[ \mathds{E}^\mu \int_\Omega   y y_t \,dx\Big]_S^T- \int_S^T\mathds{E}^\mu \int_\Omega   |y_t|^2 \,dx\,dt
\\&-  \mu \int_S^T \mathds{E}^{\mu-1}\mathds{E}'\int_\Omega y y_t\,dx\,dt+\int_S^T\mathds{E}^\mu \upvarphi\Big(\int_\Omega | \nabla y |^2\,dx\Big)  \int_\Omega   |\nabla y|^2 \,dx\,dt
 \\&-\upalpha   \int_S^T\mathds{E}^\mu \int_\Omega   \nabla y \nabla \uptheta \,dx\,dt.
\end{split}
\end{equation*}
Using (\ref{EN}), we have 
\begin{equation}\label{G1L}
\begin{split}
2\int_S^T\mathds{E}^{\mu+1} \,dt
& =-\Big[ \mathds{E}^\mu \int_\Omega   y y_t \,dx\Big]_S^T+2 \int_S^T\mathds{E}^\mu \int_\Omega   y_t^2 \,dx\,dt
\\&+  \mu \int_S^T \mathds{E}^{\mu-1}\mathds{E}'\int_\Omega y y_t\,dx\,dt +\upalpha   \int_S^T\mathds{E}^\mu \int_\Omega   \nabla y \nabla \uptheta \,dx\,dt\\&
+\frac{\upalpha}{\upbeta}\int_S^T\mathds{E}^\mu \int_\Omega \uptheta^2\,dx\,dt-\frac{\mathfrak{m}_1}{2} \int_S^T\mathds{E}^{\mu} \Big(\int_\Omega | \nabla y |^2\,dx\Big)^2\,dt.
\end{split}
\end{equation}
Since $\mathds{E}$ is nonincreasing, and using Cauchy-Schwarz and Poincar\'e inequalities, we have
\begin{equation}\label{A1}
\Big| \Big[ \mathds{E}^\mu \int_\Omega   y y_t \,dx\Big]_S^T\Big|\leq C_0\mathds{E}^{\mu+1}(S)
\end{equation}
\begin{equation}\label{A2}
\begin{split}
\Big|\mu \int_S^T \mathds{E}^{\mu-1}\mathds{E}'\int_\Omega y y_t\,dx\,dt\Big| &\leq C_0 \int_S^T \mathds{E}^{\mu}(-\mathds{E}')\,dt 
\\&\leq C_0 \mathds{E}^{\mu+1}(S) 
\end{split}
\end{equation}
\begin{equation}\label{A3}
\begin{split}
\frac{\upalpha}{\upbeta}\int_S^T\mathds{E}^\mu \int_\Omega \uptheta^2\,dx\,dt &\leq C_0 \int_S^T \mathds{E}^{\mu}(-\mathds{E}')\,dt 
\\&\leq C_0 \mathds{E}^{\mu+1}(S) 
\end{split}
\end{equation}
\begin{equation*}
\begin{split}
\upalpha   \int_S^T\mathds{E}^\mu \int_\Omega   \nabla y \nabla \uptheta \,dx\,dt\leq C_0 \int_S^T\mathds{E}^{\mu+\frac{1}{2}}(-\mathds{E}')^{\frac{1}{2}}\,dt.
\end{split}
\end{equation*}
Now, fix an arbitrarily small $\upepsilon_0 > 0$, and applying Young's inequality, we obtain
\begin{equation}\label{A4}
\begin{split}
\Big|\upalpha   \int_S^T\mathds{E}^\mu \int_\Omega   \nabla y \nabla \uptheta \,dx\,dt \Big| &\leq 
 \upepsilon_0 \int_S^T \mathds{E}^{2\mu+1}(t)\,dt+ C(\upepsilon_0) \int_S^T (-\mathds{E}')\,dt
\end{split}
\end{equation}
Taking into account (\ref{A1})-(\ref{A4}) into (\ref{G1L}) and Poincar\'e inequality, we obtain
\begin{equation}\label{G15}
\begin{split}
2\int_S^T\mathds{E}^{\mu+1}\,dt
& \leq \upepsilon_0 \int_S^T \mathds{E}^{2\mu+1}\,dt+C_0\mathds{E}^{\mu+1}(S)+C_0 \mathds{E}(S) \\&+2c_s \int_S^T\mathds{E}^\mu \int_\Omega    | \nabla y_t|^2 \,dx\,dt
-\frac{\mathfrak{m}_1}{2} \int_S^T\mathds{E}^{\mu} \Big(\int_\Omega | \nabla y |^2\,dx\Big)^2\,dt.
\end{split}
\end{equation}
$\blacktriangleright$\textbf{ Step 2:}  In this step, we are going to estimate  the four term  in the right hand side of (\ref{G15}).\\
 Multiplying the second  Eq. of (\ref{P}) by $ \mathds{E}^\mu y_t$, and integrating by parts over $\Omega \times (S,T)$, we get
\begin{equation}\label{G7}
\begin{split}
0&=\int_\Omega\int_S^T\mathds{E}^{\mu} y_t(\uptheta_t-\Delta \uptheta-\upbeta \Delta y_t)\,dx\,dt\\&=
\Big[ \mathds{E}^\mu \int_\Omega    y_t  \uptheta\,dx\Big]_S^T - \mu \int_S^T \mathds{E}^{\mu-1}\mathds{E}' \int_\Omega   y_t \uptheta \,dx\,dt
- \int_S^T \mathds{E}^{\mu}\int_\Omega \uptheta y_{tt}  \,dx\,dt  \\&+\int_S^T \mathds{E}^{\mu}\int_\Omega  \nabla y_t \nabla  \uptheta  \,dx\,dt +\upbeta\int_S^T \mathds{E}^{\mu}\int_\Omega | \nabla y_t |^2\,dx\,dt.
\end{split}
\end{equation}
Since $\mathds{E}$ is nonincreasing, and using Cauchy-Schwarz inequalities, we have
\begin{equation}\label{AG1}
\Big| \Big[ \mathds{E}^\mu \int_\Omega    y_t \uptheta\,dx\Big]_S^T\Big|\leq C_1\mathds{E}^{\mu+1}(S),
\end{equation}
\begin{equation}\label{AG2}
\begin{split}
\Big|\mu \int_S^T \mathds{E}^{\mu-1}\mathds{E}'\int_\Omega  y_t \uptheta\,dx\,dt\Big| &\leq C \int_S^T \mathds{E}^{\mu}(-\mathds{E}')\,dt 
\\&\leq C_1 \mathds{E}^{\mu+1}(S) .
\end{split}
\end{equation}
Applying Young's inequality, we get
\begin{equation}\label{AG3}
\begin{split}
\Big|\int_S^T \mathds{E}^{\mu}\int_\Omega  \nabla y_t \nabla  \uptheta  \,dx\,dt\Big| &\leq \frac{\upbeta}{2} \int_S^T \mathds{E}^{\mu}\int_\Omega | \nabla y_t |^2\,dx\,dt +\frac{1}{2\upbeta} \int_S^T \mathds{E}^{\mu}\int_\Omega | \nabla \uptheta |^2\,dx\,dt\\&\leq \frac{\upbeta}{2} \int_S^T \mathds{E}^{\mu}\int_\Omega | \nabla y_t |^2\,dx\,dt +\frac{1}{2\upalpha}\int_S^T \mathds{E}^{\mu} (-\mathds{E}')\,dt
\\&\leq \frac{\upbeta}{2} \int_S^T \mathds{E}^{\mu}\int_\Omega | \nabla y_t |^2\,dx\,dt +C_1 \mathds{E}^{\mu+1}(S).
\end{split}
\end{equation}
Reporting  (\ref{AG1})-(\ref{AG3}) into (\ref{G7}) yields
\begin{equation}\label{GG7}
\begin{split}
\frac{\upbeta}{2}\int_S^T \mathds{E}^{\mu}\int_\Omega | \nabla y_t |^2\,dx\,dt\leq  C_1\mathds{E}^{\mu+1}(S)+ \int_S^T \mathds{E}^{\mu}\int_\Omega \uptheta y_{tt}  \,dx\,dt.
\end{split}
\end{equation}
$\blacktriangleright$\textbf{ Step 3:}  In this step, we are going to estimate the  second term in the right
hand side of (\ref{GG7}).\\
Multiplying the first equation in (\ref{P}) by $\mathds{E}^\mu  \uptheta$ and using Green's formula over $\Omega \times (S,T)$, we find
\begin{equation}\label{L2}
\int_S^T \mathds{E}^\mu\int_\Omega  \uptheta y_{tt}\,dx\,dt = -\int_S^T \mathds{E}^\mu \upvarphi\Big(\int_\Omega | \nabla y |^2\,dx\Big)\int_\Omega \nabla y\nabla \uptheta\,dx\,dt 
+\upalpha \int_S^T \mathds{E}^\mu\int_\Omega | \nabla \uptheta|^2\,dx\,dt.
\end{equation}
Thanks to Cauchy-Schwarz inequality, and the definition of the energy $\mathds{E}$, we easily derive
\begin{equation}\label{L1}
\begin{split}
&\Big|\int_S^T \mathds{E}^\mu \upvarphi\Big(\int_\Omega | \nabla y |^2\,dx\Big)\int_\Omega \nabla y\nabla \uptheta\,dx\,dt \Big|\\&\leq 
\sqrt{\frac{2}{\mathfrak{m}_0}} \int_S^T  \upvarphi\Big(\int_\Omega | \nabla y |^2\,dx\Big)\mathds{E}^{\mu+\frac{1}{2}}\Big(\int_\Omega |\nabla \uptheta| ^2\,dx\Big)^{\frac{1}{2}}\,dt.
\end{split}
\end{equation}
Now, pick an arbitrarily small $\upepsilon_1 > 0$,  applying Young's inequality and using the expression of $\upvarphi$, we obtain
\begin{equation}\label{L1}
\begin{split}
&\Big|\int_S^T \mathds{E}^\mu \upvarphi\Big(\int_\Omega | \nabla y |^2\,dx\Big)\int_\Omega \nabla y\nabla \uptheta\,dx\,dt \Big|\\&\leq 
2\upepsilon_1 \int_S^T  \mathds{E}^{2\mu+1}\,dt+ \frac{4\mathfrak{m}_1}{\mathfrak{m}_0^2}\upepsilon_1 \int_S^T  \mathds{E}^{2\mu+2}\,dt+C(\upepsilon_1)
 \int_S^T  \upvarphi\Big(\int_\Omega | \nabla y |^2\,dx\Big) (-\mathds{E}')\,dt
 \\&\leq
2\upepsilon_1 \int_S^T  \mathds{E}^{2\mu+1}\,dt+ \frac{4\mathfrak{m}_1}{\mathfrak{m}_0^2}\upepsilon_1 \mathds{E}(0)\int_S^T  \mathds{E}^{2\mu+1}\,dt+C(\upepsilon_1)
 \mathds{E}^{\mu+1}(S).
\end{split}
\end{equation}
Taking into account (\ref{L1}) into (\ref{L2})  and using (\ref{L3}), we obtain
\begin{equation}\label{P1}
\begin{split}
\int_S^T \mathds{E}^\mu\int_\Omega  \uptheta y_{tt}\,dx\,dt \leq 
C_2\upepsilon_1 \int_S^T  \mathds{E}^{2\mu+1}\,dt+C_2
 \mathds{E}^{\mu+1}(S)+ C_2\mathds{E}(0)\mathds{E}^{\mu+1}(S)
\end{split}
\end{equation}
Combining (\ref{G15}), (\ref{GG7}) and (\ref{P1}), we find
\begin{equation}\label{G1}
\begin{split}
2\int_S^T \mathds{E}^{\mu+1}\,dt
& \leq \ C\upepsilon\int_S^T  \mathds{E}^{2\mu+1}(t)\,dt+C \mathds{E}^{\mu+1}(S)+C \mathds{E}(0) \mathds{E}^{\mu+1}(S)+C\mathds{E}(S) 
\end{split}
\end{equation}
Choosing $\upepsilon$  small enough
and  $\mu = 0$, we obtain 
\begin{equation}\label{G1}
\begin{split}
\int_S^T\mathds{E}\,dt
& \leq C'\mathds{E}(S)+C' \mathds{E}(0)\mathds{E}(S)+C' \mathds{E}(S) 
\end{split}
\end{equation}
where $C'$ is  positive constant independent of $\mathds{E}(0)$.\\
Now, the proof is achieved by applying Lemma \ref{H1}.

\smallskip
\noindent
\hspace{0.2mm}\\\\
{\bf Acknowledgments}. The author would like to thanks to the editor
and anonymous referees for constructive comments and suggestions that improved
the quality of this manuscript.

\end{document}